\documentclass[10pt]{amsart}
\usepackage{amssymb,amsthm,amsmath,amsfonts}
\usepackage{hyperref}
\textwidth=16cm
\oddsidemargin=0pt
\evensidemargin=0pt

\def\R{{\mathbb R}}

\def\Z{{\mathbb Z}}
\def\T{{\mathbb T}}

\def\<{\langle}
\def\>{\rangle}

\def\S{{\mathbb S}}

\def\X{{\mathbb X}}

\def\Y{{\mathbb Y}}

\newtheorem{th-def}{Theorem-Definition}[section]
\newtheorem{theo}{Theorem}[section]

\newtheorem{prop}[theo]{Proposition}

\newtheorem{cor}[theo]{Corollary}

%\title{Remarks on free random variable with not necessarily bounded support ???}
\title{A Characterization of the Normal Distribution
by the Independence of a Pair of Random Vectors}
\author[Wiktor Ejsmont]{Wiktor Ejsmont}

\address[Wiktor Ejsmont]{
%Department of Mathematical Structure Theory (Math C)
%TU Graz
%Steyrergasse 30, 8010 Graz, Austria and
 Department of Mathematics and Cybernetics
  Wroc\l aw University of Economics \\
ul. Komandorska 118/120, 53-345 Wroc³aw, Poland}
\email{wiktor.ejsmont@ue.wroc.pl}
\subjclass[2010]{Primary: 46L54. Secondary: 62E10.}

\keywords{Cumulants, characterization of normal distribution}
\begin{document}
\begin{abstract} 
%In  probability theory, various characterizations of the
%Gaussian law have been obtained. 
%For instance,
 Kagan and Shalaevski \cite{KaganShalaevski} have shown that if the random variables  $\X_1,\dots,\X_n$ are  independent and identically distributed  and the  distribution of $\sum_{i=1}^n(\X_i+a_i)^2$ $a_i\in \mathbb{R}$ depends only on $\sum_{i=1}^na_i^2$ , then each $\X_i$ follows the normal distribution $N(0, \sigma)$. Cook \cite{Cook} generalized this result  replacing independence of all $\X_i$ by the independence of  $(\X_1,\dots, \X_m) \textrm{ and } (\X_{m+1},\dots,\X_n )$ and removing the requirement that $\X_i$ have the same distribution.
%In this paper we will  give a characterization of the normal distribution
%by the independence of a pair of random vectors.  Our proof is more direct and straightforward one. 
In this
paper, we will give  other characterizations of the normal distribution which are formulated in a similar spirit.
\end{abstract}
\maketitle

\section{Introduction}
\begin{quote}
\textit{It will be shown that the formulae are much
simplified by the use of cumulative moment
functions, or semi-invariants, in place of the
crude moments.
R.A. Fisher \cite{Fisher}}.
\end{quote}

The original motivation for this paper comes from a desire to understand the results about  characterization of normal distribution which were shown in \cite{Cook} and \cite{KaganShalaevski}. They proved, that the characterizations of a normal law  are given by a certain
invariance of the  noncentral chi-square distribution. 
It is a known fact that if $\X_1,\dots,\X_n$ are i.i.d. and following the normal distribution $N(0,\sigma)$ then the distribution of the statistic $\sum_{i=1}^n(\X_i+a_i)^2$, $a_i\in \mathbb{R}$ depends on $\sum_{i=1}^na_i^2$ only (see  \cite{Bryc1,Morgan}). Kagan and Shalaevski \cite{KaganShalaevski} have shown that if the random variables 
$\X_1 ,\X_2 ,..., \X_n$ are independent and identically distributed and the distribution
of $\sum_{i=1}^n(\X_i+a_i)^2$ depends only on $\sum_{i=1}^na_i^2$, then each $\X_i$ is normally distributed as $N(0, \sigma)$. Cook generalized this result  replacing independence of all $\X_i$ by the independence of  $(\X_1,\dots, \X_m) \textrm{ and } (\X_{m+1},\dots,\X_n )$ and removing the requirement that $\X_i$ have the same distribution. The theorem proved below gives a new look on this subject, i.e. we will show that in the  statistic $\sum_{i=1}^n(\X_i+a_i)^2=\sum_{i=1}^n\X_i^2 +2\sum_{i=1}^n\X_ia_i+ \sum_{i=1}^na_i^2$ only the linear part $\sum_{i=1}^n \X_ia_i$ is important. 
 In particular, from the above result we get  Cook Theorem from \cite{Cook},  but under the assumption  
that all moments exist. Note that Cook does not assume any moments, but he gets this result under integrability assumptions imposed on the corresponding random variable.  This paper is removing or at least relaxing its integrability assumptions.
%(The result of Cook \cite{Cook} will follow only from a version with no integrability assumptions.)

The paper is organized as follows. In section 2 we review basic facts about cumulants. Next in the third section we   state and prove the main results (proposition). In this section we also discuss   the problem. 

\section{Cumulants and moments}

Cumulants were first defined and studied by the Danish scientist T. N. Thiele.
He called them semi-invariants. The importance of cumulants comes from
the observation that many properties of random variables can be better
represented by cumulants than by moments. We refer to Brillinger \cite{Brillinger} and
Gnedenko and Kolmogorov \cite{G-K} for further detailed probabilistic aspects
of this topic.

Given a random variable $\X$ with the moment generating function $g(t)$, its
$i$th cumulant $r_i$ is defined as
$$r_i(\X):=r_{i}(\underbrace{ \X,\dots,\X}_{i-times})=\frac{d^i}{dt^i}\Big|_{t=0}log( g(t)).$$
That is, 
$$\sum_{i=0}^\infty\frac{m_i}{i!}t^i=g(t)=exp\Big(\sum_{i=1}^\infty\frac{r_i}{i!}t^i\Big)$$
where $m_i$ is the $i$th moment of $\X$.

Generally, if $\sigma$ denotes the standard deviation, then
$$r_1=m_1,\qquad r_2=m_2-m_1^2=\sigma, \qquad r_3=m_3-3m_2m_1+2m_1^3.$$
The joint cumulant of several random variables $\X_1, \dots \X_n$ of order $(i_1,\dots,i_n)$, where $i_j$ are nonnegative integers, is defined by a similar  generating function $g(t_1,\dots,t_n)=E\big(e^{\sum_{i=1}^nt_i\X_i}\big)$
$$r_{i_1+\dots +i_n}(\underbrace{ \X_1,\dots,\X_1}_{i_1-times},\dots,\underbrace{ \X_n,\dots,\X_n}_{i_n-times})=\frac{d^{i_1+\dots +i_n}}{dt_1^{i_1}\dots dt_n^{i_n}}\Big|_{t=0}log( g(t_1,\dots,t_n)),$$
where $t=(t_1,\dots,t_n).$ 
%$$E(\X)=\int_{\mathbb{R}}x^n\mu (dx)$$ is called the moment of order $n$ of the measure $\mu(dx)$. We are interested the probability measure probability measure with all moments finite. 
%Given a seqence $\X_1, \X_2, \dots $ of classical random variable let $\mathbb{C} \langle \mathbb{X}_{1},\dots ,\mathbb{X}_{n} \rangle$ denote the  of polynomials in variables $\mathbb{X}_{1},\dots ,\mathbb{X}_{n}$.
%The classical cumulants are the $k$-linear maps $r_{k} : \mathbb{C} \langle \mathbb{X}_{1},\dots ,\mathbb{X}_{k} \rangle  \to\mathbb{C}$ defined  by the recursive  formula (connecting them with mixed moments)
%\begin{eqnarray}
%E(\mathbb{X}_{1}\mathbb{X}_{2}\dots \mathbb{X}_{n}) = \sum_{\nu \in P(n)}R_{\nu}(\mathbb{X}_{1},\mathbb{X}_{2},\dots ,\mathbb{X}_{n}),\label{eq:DefinicjaKumulant}
%\end{eqnarray}
%where 
%\begin{eqnarray}
%R_{\nu}(\mathbb{X}_{1},\mathbb{X}_{2},\dots ,\mathbb{X}_{n}):=\Pi_{B \in \nu}r_{|B|}(\mathbb{X}_{i}:i \in B) 
%\end{eqnarray}
%and $P(n)$ is the set of all  partitions of $\{1, 2,\dots, n \}$. Sometimes we will write $r_{k}(\mathbb{X})=r_{k}( \mathbb{X},\dots ,\mathbb{X} )$. 
\\ \\
\noindent Random variables $ \mathbb{X}_{1},\dots ,\mathbb{X}_{n} $  are  independent if and only if, for every $n \geq 1$ and every non-constant choice of $\mathbb{Y}_{i} \in \{ \mathbb{X}_{1},\dots ,\mathbb{X}_{n}  \}$, where $i \in \{1,\dots,k\}$ (for some positive integer  $k\geq 2$) we get $r_{k}( \mathbb{Y}_{1},\dots ,\mathbb{Y}_{k} )=0$. 
\\ \\
Cumulants of some important and familiar random distributions are listed
as follows:
\begin{itemize}
\item The Gaussian distribution $N(\mu,\sigma)$ possesses the simplest list of cumulants: $r_1=\mu$, $r_2=\sigma$ and  $r_n=0$ for $n\geq 3 $,% -- this is a characterization of the normal distribution,
\item for the Poisson distribution with mean $\lambda$ we have $r_n=\lambda$.
\end{itemize}
These classical examples clearly demonstrate the simplicity and efficiency
of cumulants for describing random variables. 
%It is apparently not fortuitous
%for cumulants to
Apparently, it is not accidental that cumulants  encode the most important information of the associated
random variables. The underlying reason may well reside in the following
three important properties (which are in fact related to each other):
\begin{itemize}
\item
(Translation Invariance) For any constant $c$,
$r_1(\X+c)=c+r_1(\X)$ and  $r_n(\X+c)=r_n(\X)$,  $n\geq 2$.
\item (Additivity) Let $\X_1,\dots, \X_m$ be any independent random variables.
Then, $r_n(\X_1+\dots+\X_m)=r_n(\X_1)+\dots+r_n(\X_m)$, $n\geq 1$.
\item  (Commutative property)  $r_n(\X_1,\dots,\X_n)=r_n(\X_{\sigma(1)},\dots,\X_{\sigma(n)})$ for any permutation $\sigma\in S_n.$
\item (Multilinearity) $r_{k}$ are the $k$-linear maps. 
\end{itemize}
For more details about cumulants and
 probability theory, the reader can consult \cite{Lehner} or \cite{Rota} .

\section{The Characterization theorem}
The main result of this paper is the following characterization of normal distribution
in terms of independent random vectors. 

\begin{prop}
Suppose vectors $(\S_1,\Y) \textrm{ and } (\S_2,  \Z )$ with all moments are independent and $\S_1,\S_2$ are
nondegenerate. If for every $a,b \in \mathbb{R}$ the linear combination $a\S_1 + \Y + b\S_2 + \Z$ has the law that
depends on $(a, b)$ through $a^2 + b^2$ only, then random variables $\S_1, \S_2$ have the same normal distribution 
%.are normal 
and $cov(\S_1,\Y)=cov(\S_2,\Z)=0$.

%Let $(\S_1,\Y) \textrm{ and } (\S_2,  \Z )$ be independent  classical random vectors and let statistic $\sum_{i=1}^m a_i\X_i+\Y+\sum_{i=m+1}^na_i\X_i+\Z + c(\sum_{i=1}^n a_i^2)$
%has a distribution which depends only on $\sum_{i=1}^n a_i^2$, $a_i\in \mathbb{R}$ and $c \in \mathbb{R}$ (c is arbitrary constant).  We also assume that $\X_i$ are  non-degenerate.  Then $\X_i $ are independent and have the same normal distribution with zero means and $cov(\X_i,\Y)=0$, $cov(\Z_i,\Y)=0$ for $i\in\{1,\dots,n\}$.  
\label{prop:1}
\end{prop}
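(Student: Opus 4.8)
The plan is to convert the distributional hypothesis into identities among cumulants and then exploit the rigidity of the constraint ``depends on $(a,b)$ only through $a^2+b^2$.'' Write $W(a,b)=a\S_1+\Y+b\S_2+\Z$. Because $(\S_1,\Y)$ and $(\S_2,\Z)$ are independent, the variables $a\S_1+\Y$ and $b\S_2+\Z$ are independent, so by additivity every cumulant splits,
\[
r_n(W(a,b))=r_n(a\S_1+\Y)+r_n(b\S_2+\Z).
\]
Expanding each summand by multilinearity and the commutative property, and abbreviating the order-$n$ joint cumulants $c^{(1)}_{k}=r_n(\underbrace{\S_1,\dots,\S_1}_{k},\underbrace{\Y,\dots,\Y}_{n-k})$ and $c^{(2)}_{k}=r_n(\underbrace{\S_2,\dots,\S_2}_{k},\underbrace{\Z,\dots,\Z}_{n-k})$ (the order $n$ being understood), I get
\[
r_n(W(a,b))=P_n(a)+Q_n(b),\qquad P_n(a)=\sum_{k=0}^n\binom{n}{k}c^{(1)}_{k}\,a^k,\quad Q_n(b)=\sum_{k=0}^n\binom{n}{k}c^{(2)}_{k}\,b^k .
\]
Since all moments exist, these are genuine polynomials. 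The decisive structural feature, supplied by independence, is that no mixed monomial $a^ib^j$ with $i,j\ge 1$ occurs.

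Next I would feed in the hypothesis. As the law of $W(a,b)$, and hence each $r_n(W(a,b))$, depends on $(a,b)$ only through $a^2+b^2$, the polynomial $P_n(a)+Q_n(b)$ is a function of $a^2+b^2$. A polynomial that is a polynomial in $a^2+b^2$ of degree $J\ge 2$ necessarily contains a mixed monomial (e.g. the term $\binom{J}{1}a^2b^{2(J-1)}$, which cannot cancel against lower-degree contributions); since $P_n(a)+Q_n(b)$ has \emph{no} mixed terms, it must be an affine function of $a^2+b^2$. Thus $P_n(a)+Q_n(b)=\alpha_n+\beta_n(a^2+b^2)$, and separating the $a$- and $b$-parts forces $P_n(a)=\beta_n a^2+\gamma_n$ and $Q_n(b)=\beta_n b^2+\delta_n$ \emph{with the same quadratic coefficient} $\beta_n$. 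This collapse to degree two, together with the sharing of $\beta_n$ across the two groups, is the heart of the argument and the step I expect to require the most care.

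Matching coefficients in $P_n$ then reads off the joint cumulants. Vanishing of the coefficient of $a^1$ gives $c^{(1)}_{1}=r_n(\S_1,\Y,\dots,\Y)=0$ for every $n$; specializing to $n=1$ yields $r_1(\S_1)=\E[\S_1]=0$, and to $n=2$ yields $r_2(\S_1,\Y)=\operatorname{cov}(\S_1,\Y)=0$. Vanishing of the coefficients of $a^k$ for $3\le k\le n$ gives $c^{(1)}_{k}=0$, and taking $k=n$ yields the pure cumulants $r_n(\S_1)=0$ for all $n\ge 3$. The identical computation for $Q_n$ gives $\operatorname{cov}(\S_2,\Z)=0$ and $r_n(\S_2)=0$ for $n\ge 3$. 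Hence $\S_1$ has cumulants $r_1=0$, $r_2=\sigma_1>0$ (nonzero by nondegeneracy), $r_n=0$ for $n\ge 3$, so by the cumulant characterization recalled in Section~2 we have $\S_1\sim N(0,\sigma_1)$, and likewise $\S_2\sim N(0,\sigma_2)$.

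Finally, to identify the two laws I would invoke that $\beta_n$ is common to $P_n$ and $Q_n$: the relation $\binom{n}{2}c^{(1)}_{2}=\beta_n=\binom{n}{2}c^{(2)}_{2}$ gives $c^{(1)}_{2}=c^{(2)}_{2}$ at every order $n\ge 2$, and at $n=2$ this reads $r_2(\S_1)=r_2(\S_2)$, i.e.\ $\sigma_1=\sigma_2$. Therefore $\S_1$ and $\S_2$ have the same normal distribution and $\operatorname{cov}(\S_1,\Y)=\operatorname{cov}(\S_2,\Z)=0$, as claimed.
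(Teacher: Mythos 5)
Your proof is correct, and its skeleton coincides with the paper's: both work cumulant by cumulant, split $r_n(a\S_1+\Y+b\S_2+\Z)$ into the two independent groups by additivity, show that each such cumulant must be affine in $a^2+b^2$, and then read off $E(\S_1)=0$, $cov(\S_1,\Y)=0$, and $r_n(\S_1)=0$ for $n\geq 3$ by matching coefficients; your shared quadratic coefficient $\beta_n$ plays exactly the role of the paper's $h_k(1)$, which it evaluates once as $\varphi_k^{1,0}$ and once as $\varphi_k^{0,1}$ to equate the two variances. The one step you handle genuinely differently is the collapse to degree two, and this is where the two arguments diverge. The paper sets $h_k(a^2+b^2)=r_k(a\S_1+\Y+b\S_2+\Z)-r_k(\Y+\Z)$, specializes $b=0$ and $a=0$ to obtain the Cauchy-type functional equation $h_k(a^2+b^2)=h_k(a^2)+h_k(b^2)$, and invokes continuity of $h_k$ on $[0,\infty)$ to conclude $h_k(u)=h_k(1)u$. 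You instead argue purely algebraically: $P_n(a)+Q_n(b)$ contains no mixed monomials $a^ib^j$ with $i,j\geq 1$, whereas any polynomial in $a^2+b^2$ of degree at least $2$ must contain one, so the dependence on $a^2+b^2$ is affine. Both mechanisms are sound; yours avoids functional equations entirely, at the cost of one point you should make explicit: you need to know that the function $g_n$ with $r_n(W(a,b))=g_n(a^2+b^2)$ is (the restriction of) a polynomial before you may speak of its degree. This is immediate from your own setup --- setting $b=0$ gives $g_n(a^2)=P_n(a)+Q_n(0)$, and since $g_n(a^2)$ is even in $a$, the polynomial $P_n$ is even, so $g_n(u)$ is a polynomial in $u$ on $[0,\infty)$ --- but as written it is silently assumed in the phrase ``a polynomial that is a polynomial in $a^2+b^2$.'' With that remark inserted, your argument is complete and yields exactly the conclusions of the paper's proof.
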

\begin{proof}
Let $h_k(a^2+b^2)=r_k(a\S_1+\Y+b\S_2+\Z)-r_k(\Y+\Z)$. 
Because of the independence of $(\S_1,\Y) \textrm{ and } (\S_2,  \Z )$  we may write 
\begin{align} & h_k(a^2+b^2)=r_k(a\S_1+\Y+b\S_2+\Z)-r_k(\Y+\Z)=r_k(a\S_1+\Y)-r_k(\Y)+r_k(b\S_2+\Z)-r_k(\Z). \label{eq:niezalensoc} \end{align}
Evaluating \eqref{eq:niezalensoc} first when $b=0$ and then when $a=0$, we get
$h_k(a^2)=r_k(a\S_1+\Y)-r_k(\Y)$ and $h_k(b^2)=r_k(b\S_2+\Z)-r_k(\Z)$, respectively. Substituting this into \eqref{eq:niezalensoc}, we see

 $$h_k(a^2+b^2)=h_k(a^2)+h_k(b^2).$$ 
%where $f_k(a^2)= r_k(a\S_1+\Y)-r_k(\Y)$ and $g_k(b^2)=r_k(b\S_2+\Z)-r_k(\Z)$

Note that $h_k(u)$ is continuous in $u\in[0,\infty)$, which implies $h_k(u)=h_k(1)u$ and so we have 
$h_k(a^2+b^2)=(a^2+b^2)h_k(1)=(a^2+b^2)\varphi_k(\alpha,\beta)$, where $$\varphi_k(\alpha,\beta)=h_k(\alpha^2+\beta^2)=r_k(\alpha\S_1+\Y+\beta\S_2+\Z)-r_k(\Y+\Z), $$ 
with $\alpha^2+\beta^2=1$.  In the next part of the proof, we will  compare polynomial,  which give us the correct
cumulants values. 
Let's first consider
the following equation $$h_k( a^2)= a^2\varphi_k(1,0),$$ which gives us 

\begin{align} r_k(a\S_1+\Y)-r_k(\Y)=a^2(r_k(\S_1+\Y)-r_k(\Y)). \label{eq:1} \end{align}
For $k=1$ we get  $E(\S_1)=0$, because $r_1(a\S_1+\Y)=E(a\S_1+\Y)$.
By putting $k=2$ in \eqref{eq:1} and using $r_2(a\S_1+\Y)=Var(a\S_1+\Y)=a^2Var(\S_1)+2acov(\S_1,\Y)+Var(\Y)$ we see  $$2a cov( \S_1, \Y)+a^2 Var( \S_1)=a^2(2cov( \S_1, \Y)+Var( \S_1)), $$ for all $a\in\R$, which implies that    $cov( \S_1, \Y)=0$.
% Thus, we also have 
%  $cov(\S_1,\Y)=r_2( \S_1, \Y)-r_1( \S_1)r_1( \Y)=0$. 
% and $r_1(\X_i)=0$ and $r_k(\X_i)=0$ for $k>2,i\in\{2,\dots,n\}$.  
Now by expanding equation \eqref{eq:1}  ($r_k$ are $k$-linear maps, we also  use independence),
% and $r_k(\Y)$ is cancel) 
we may write 
%$$r_k(\X^2+2a\X+\Y)-r_k(\X^2+\Y)=a^2(r_k((\X+1)^2+\Y)-r_k(\X^2+\Y))$$
\begin{align} \sum_{i=1}^ka^{i} {k \choose i} r_k(\underbrace{ \S_1,\dots,\S_1}_{i-times},\underbrace{ \Y,\dots,\Y}_{k-i-times})=a^2(r_k(\S_1+\Y)-r_k(\Y)), \label{eq:2} \end{align}
for $k\geq 2$.
 This gives us  $r_k(\S_1)=0$ for $k>2$ and we have actually proved that $\S_1$ have the normal distribution  with zero mean.  Analogously, we will show that   $cov( \S_2, \Z)=0$ and  normality of  $\S_2.$ 
\\
\\
\noindent 
The next example presents an analogous construction for $h_k( a^2)= a^2\varphi_k^{0,1}(1)$, which involves the element $\varphi_k^{0,1}(1)$ instead of  $\varphi_k^{1,0}(1)$ which leads to $2a cov( \S_1, \Y)+a^2 Var( \S_1)=a^2(2cov( \S_2, \Z)+Var( \S_2)) $. But in the previous paragraph we calculated that $cov( \S_1, \Y)=cov( \S_2, \Z)=0$ which means that $Var( \S_1)=Var( \S_2)$ (we have common variance), i.e. we have the same distribution. 
%Which shows all $\X_i$ to be distributed  normally wand common variance.   
%\\
%\\
%\textsc{ Chyba do usuniecia Now we will show the  independence of $\X_1,\dots,\X_m$. Let $\P_i\in\{\X_1,\dots,\X_m\} \textrm{ }i\in\{1,\dots,k\}$, where $k\geq 2$.  
%\textrm{ }i\in\{1,\dots,k\}$ 
%where $k\geq 2$. 
% be non-constant sequence. We will show that $r_k(\P_1,\P_2,\dots,\P_k)=0$. Let $p_1,\dots,p_k$ be the number of $\P_i$ which is equal to  $\X_i$, then 
%\begin{align}  &r_k(\sum_{i=1}^m a_i\X_i+\Y)-r_k(\Y)={k \choose p_1,\dots,p_k}a_1^{p_1}\dots a_k^{p_k}r_k(\P_1,\P_2,\dots,\P_k)+R_{}%\\&+\sum_{p'_1+\dots+p'_k=k \newline \textrm{ and } (p'_1,\dots,p'_k)\neq(p_1,\dots,p_k)}{k \choose p'_1,\dots,p'_k}
%=(\sum_{i=1}^m a_i^2)h(1),
%\end{align} 
%where $R$ is polynomial of $a_1,\dots, a_k$ with zero coefficient of  $a_1^{p_1}\dots a_k^{p_k}$,
%which gives us 
%   $r_k(\P_1,\P_2,\dots,\P_k)=0$. }

\end{proof}
\noindent
As a corollary we get the following theorem.
\begin{theo}
Let $(\X_1,\dots, \X_m,\Y) \textrm{ and } (\X_{m+1},\dots,\X_n,\Z)$ be independent  random vectors with all moments, where  $\X_i$ are
nondegenerate,  and let statistic $\sum_{i=1}^na_i\X_i+\Y+\Z$
have a distribution which depends only on $\sum_{i=1}^n a_i^2$, where $a_i\in \mathbb{R}$ and $1\leq m < n$.  Then $\X_i $ are independent and have the same normal distribution with zero means and $cov(\X_i,\Y)=cov(\X_i,\Z)=0$ for $i\in\{1,\dots,n\}$. 
\label{tw:Main1}
\end{theo}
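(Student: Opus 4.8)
The plan is to deduce Theorem~\ref{tw:Main1} from Proposition~\ref{prop:1} by repeatedly applying the proposition to well-chosen pairings of the coordinate variables. The key observation is that for any fixed index $p\in\{1,\dots,m\}$ and any fixed index $q\in\{m+1,\dots,n\}$, I can group the remaining variables into the two independent blocks prescribed by the hypothesis. Concretely, I would set $\S_1=\X_p$ and absorb the other left-block variables into a new vector, writing $\widetilde\Y=\sum_{i\le m,\,i\ne p}a_i\X_i+\Y$ for the variable playing the role of $\Y$; similarly set $\S_2=\X_q$ and $\widetilde\Z=\sum_{i>m,\,i\ne q}a_i\X_i+\Z$. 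Since $(\X_1,\dots,\X_m,\Y)$ and $(\X_{m+1},\dots,\X_n,\Z)$ are independent, the pair $(\S_1,\widetilde\Y)$ is independent of $(\S_2,\widetilde\Z)$, and the statistic $\sum_{i=1}^n a_i\X_i+\Y+\Z$ equals $a_p\S_1+\widetilde\Y+a_q\S_2+\widetilde\Z$, whose law depends on $(a_p,a_q)$ through $a_p^2+a_q^2$ once the other $a_i$ are frozen. Applying Proposition~\ref{prop:1} then yields that $\X_p$ and $\X_q$ are normal with zero mean, have equal variance, and satisfy $\mathrm{cov}(\X_p,\widetilde\Y)=\mathrm{cov}(\X_q,\widetilde\Z)=0$.

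Running this argument over all admissible pairs $(p,q)$ gives, first, that every $\X_i$ is normal with mean zero; and second, that all the $\X_i$ share a common variance (each left-block variable shares its variance with each right-block variable, and since $m\ge 1$ and $n-m\ge 1$ this chains all indices together). It therefore remains to establish independence of the $\X_i$ and the covariance conditions $\mathrm{cov}(\X_i,\Y)=\mathrm{cov}(\X_i,\Z)=0$. For these I would extract more information from the covariance conclusion of the proposition. Taking $a_i=0$ for every index other than the chosen $p$ collapses $\widetilde\Y$ to $\Y$ itself, so the vanishing covariance $\mathrm{cov}(\S_1,\widetilde\Y)=0$ specializes to $\mathrm{cov}(\X_p,\Y)=0$; the symmetric choice gives $\mathrm{cov}(\X_q,\Z)=0$, covering all $i$. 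To obtain $\mathrm{cov}(\X_p,\X_{p'})=0$ for two left-block indices, I would instead keep $a_{p'}$ free inside $\widetilde\Y$, expand the relation $\mathrm{cov}(\X_p,\widetilde\Y)=0$ linearly in the frozen coefficients, and match powers of $a_{p'}$.

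The cleanest route to full independence, however, is to argue directly with joint cumulants, using the cumulant characterization of independence recorded earlier in the excerpt: the $\X_i$ are independent precisely when every mixed cumulant involving at least two distinct $\X_i$ vanishes. The plan is to expand the functional equation $r_k\bigl(\sum_{i=1}^n a_i\X_i+\Y+\Z\bigr)=(\text{function of }\textstyle\sum a_i^2)$ by multilinearity of $r_k$ into a polynomial in $(a_1,\dots,a_n)$, and then compare coefficients. A monomial $a_{i_1}\cdots a_{i_k}$ in which not all indices coincide cannot be reconstructed from a power of $\sum a_i^2$ unless its coefficient vanishes; this forces every genuinely mixed cumulant of the $\X_i$ to be zero, which is exactly independence. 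The main obstacle I anticipate is bookkeeping: one must handle the cross terms that couple an $\X_i$ to $\Y$ or $\Z$ (which are not weighted by any $a_i$) and verify that freezing all but two of the $a_i$ really does place the problem inside the hypotheses of Proposition~\ref{prop:1}; once the reduction to pairwise application is set up carefully, the normality, equal-variance, and covariance claims follow immediately, and the coefficient-matching argument upgrades pairwise orthogonality to genuine independence.
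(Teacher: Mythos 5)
Your first step is sound, and it is essentially a coordinate-wise variant of what the paper itself does. Freezing all coefficients except $a_p,a_q$ and absorbing the remaining terms into $\widetilde{\Y}$ and $\widetilde{\Z}$ does place you inside the hypotheses of Proposition \ref{prop:1}: the law of $a_p\X_p+\widetilde{\Y}+a_q\X_q+\widetilde{\Z}$ depends on $a_p^2+a_q^2+C$ with $C$ frozen, hence on $a_p^2+a_q^2$ only. This legitimately yields marginal normality with zero mean, a common variance across all indices, $cov(\X_i,\Y)=cov(\X_i,\Z)=0$, and, by varying the frozen coefficients and using linearity of covariance, $cov(\X_i,\X_j)=0$; this is exactly the trick the paper uses in its second application of the proposition (where $\X_2+\Y$ plays the role of $\Y$). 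The structural difference is that the paper's \emph{first} application takes $\S_1=\sum_{i=1}^m a_i\X_i/\sqrt{\sum_{i=1}^m a_i^2}$ rather than a single coordinate, so it obtains normality of \emph{every} linear combination of each block, i.e.\ joint normality, and then finishes with the classical fact that jointly normal, pairwise uncorrelated variables are independent. Your pairwise version delivers only marginal normality, which is strictly weaker, so the entire burden of proving independence falls on your cumulant expansion --- and that is where the proposal breaks.

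The coefficient-matching claim is false as stated: a mixed monomial whose exponents are all even \emph{can} be reconstructed from a power of $\sum_i a_i^2$. For instance $a_1^2a_2^2$ occurs in $\bigl(\sum_i a_i^2\bigr)^2$ with coefficient $2$, so matching coefficients in $r_4\bigl(\sum_i a_i\X_i+\Y+\Z\bigr)=f_4\bigl(\sum_i a_i^2\bigr)$, with $f_4(u)=c_0+c_1u+c_2u^2$, gives only $6\,r_4(\X_1,\X_1,\X_2,\X_2)=2c_2$; nothing in this identity forces the mixed cumulant to vanish. (Only monomials containing an odd exponent are genuinely absent from polynomials in $\sum_i a_i^2$.) The gap is repairable, but it needs an input your argument never invokes, namely the normality you proved in the first step. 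Matching the pure coefficient of $a_1^4$ gives $r_4(\X_1)=c_2$; since $\X_1$ is already known to be normal, $r_4(\X_1)=0$, hence $c_2=0$, and only then does $r_4(\X_1,\X_1,\X_2,\X_2)=0$ follow. In general, for even $k\geq 4$ the vanishing of $r_k(\X_i)$ kills the top coefficient $c_{k/2}$ of $f_k$ and with it the entire homogeneous degree-$k$ part of the expansion, forcing all order-$k$ mixed cumulants of the $\X_i$ to vanish; for odd $k$ that homogeneous part is absent automatically, and the case $k=2$ is your pairwise covariance computation. Together with the cumulant criterion for independence this completes your route. You should also record a small but necessary point: to compare coefficients at all, the right-hand side must be a polynomial in $\sum_i a_i^2$, which follows by restricting to $a=(t,0,\dots,0)$ and noting the left side is then an even polynomial in $t$. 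Alternatively, abandon the cumulant step entirely and follow the paper: apply Proposition \ref{prop:1} to normalized linear combinations to get joint normality of each block, after which independence is immediate from uncorrelatedness.
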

\begin{proof}
Without loss of generality, we may assume
that $m\geq 2.$
If we put \begin{align} \S_1=\frac{\sum_{i=1}^m a_i\X_i}{\sqrt{\sum_{i=1}^m a_i^2}} \textrm{ and }\S_2=\frac{\sum_{i=m+1}^n a_i\X_i}{{\sqrt{\sum_{i=m+1}^n a_i^2}}}\end{align} and $a=\sqrt{\sum_{i=1}^m a_i^2}$, $b=\sqrt{\sum_{i=m+1}^n a_i^2}$, in Proposition \ref{prop:1} then we get  that the distribution of
$$\S_1a+\Y+\S_2b+\Z=\sum_{i=1}^m a_i\X_i+\Y+\sum_{i=m+1}^na_i\X_i+\Z,$$
depends only on $a^2+b^2=\sum_{i=1}^n a_i^2$ ,which by Proposition  \ref{prop:1} implies that $\sum_{i=1}^m a_i\X_i$ 
%and $\sum_{i=m+1}^n a_i\X_i$ 
have the normal distribution and $cov(\sum_{i=1}^m a_i\X_i,\Y)=0$ for all $a_i\in\R$.  Now, we once again use Proposition  \ref{prop:1} with   $\S_1=\X_1, \textrm{  }\S_2=\X_{m+1},$ 
% $a_2=\dots=a_m=0$ and $a_2=\dots=a_m=0$ 
then we see from assumption that the distribution of
$$a_1\X_1+\X_2+\Y+a_{m+1}\X_{m+1}+\Z$$
(where $\X_2+\Y$ play a role of $\Y$ from Proposition  \ref{prop:1}), depends only on $ a_1^2+a_{m+1}^2$ ($ a_1^2+a_{m+1}^2+1$). This gives us $cov(\X_1,\X_2+\Y)=0$, but we know  that $cov( \X_1,\Y)=0$ which implies $cov(\X_1,\X_2)=0.$ Similarly, we show that  $cov(\X_i,\X_j)=0$ for $i\neq j$. Now we use  well known facts from the general theory of probability that
 %joint normality  and zero correlation implies independence. 
if a random vector has a multivariate normal distribution (joint normality), then any two or more of its components that are uncorrelated, are independent. This implies that any two or more of its components that are pairwise independent are independent.  
Normality of linear combinations 
$\sum_{i=1}^m a_i\X_i$ for all $a_i\in\R$, means
joint normality of $(\X_1,\dots, \X_m)$ (see e.g. the definition of multivariate normal law in Billingsley \cite{Billingsley}) and taking into account that random variables $\X_1,\dots, \X_m$ are pairwise uncorrelated, we obtain independence of   $\X_1,\dots,\X_m$. 
\end{proof}
The above theorem gives us the main result by Cook \cite{Cook} but under the additional assumption that all moments exist. Note that Cook does not assume any moments but assumes integrability.

%The above theorem gives us main result  Cook \cite{Cook} but under the additional assumption that all moments exists. Note  that Cook does not assume any moments but  assume integrability. 

\begin{cor}
Let $(\X_1,\dots, \X_m) \textrm{ and } (\X_{m+1},\dots,\X_n)$ be independent  random vectors  with all moments, where  $\X_i$ are
nondegenerate, and let statistic $\sum_{i=1}^n(\X_i+a_i)^2$
have a distribution which depends only on $\sum_{i=1}^n a_i^2$, $a_i\in \mathbb{R}$  and $1\leq m < n$.  Then $\X_i $ are independent and have the same normal distribution with zero means. 
\end{cor}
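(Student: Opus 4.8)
The plan is to reduce the Corollary to Theorem~\ref{tw:Main1} by isolating the linear part of the statistic, exactly as the introduction advertises. First I would expand
$$\sum_{i=1}^n(\X_i+a_i)^2=\sum_{i=1}^n\X_i^2+2\sum_{i=1}^n a_i\X_i+\sum_{i=1}^n a_i^2,$$
and note that the final term is a deterministic constant which is itself a function of $s:=\sum_{i=1}^n a_i^2$. Subtracting a quantity that depends only on $s$ cannot destroy the property ``law depends only on $s$'', so the reduced statistic $\sum_{i=1}^n\X_i^2+2\sum_{i=1}^n a_i\X_i$ again has a distribution depending only on $s$. This is the step where the paper's observation---that only the linear part matters---is actually used.

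Next I would absorb the quadratic part into the auxiliary variables of the proposition. Setting $\Y=\sum_{i=1}^m\X_i^2$ and $\Z=\sum_{i=m+1}^n\X_i^2$, the variable $\Y$ is a measurable function of $(\X_1,\dots,\X_m)$ and $\Z$ of $(\X_{m+1},\dots,\X_n)$. Since these two blocks are independent by hypothesis, the augmented vectors $(\X_1,\dots,\X_m,\Y)$ and $(\X_{m+1},\dots,\X_n,\Z)$ are independent as well. Because each $\X_i$ has all moments, so do the squares $\X_i^2$ and their sums $\Y,\Z$, and hence the augmented vectors satisfy the moment hypothesis of Theorem~\ref{tw:Main1}. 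Finally, writing $c_i=2a_i$, the reduced statistic becomes $\sum_{i=1}^n c_i\X_i+\Y+\Z$, whose law depends only on $\sum_{i=1}^n c_i^2=4s$, i.e. only on $\sum_{i=1}^n c_i^2$. Theorem~\ref{tw:Main1} then applies directly and gives that the $\X_i$ are independent with a common normal law of zero mean, which is precisely the claim (the vanishing of $cov(\X_i,\Y)$ and $cov(\X_i,\Z)$ coming as a free bonus).

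I do not expect a serious obstacle here, since all the analytic content already resides in Theorem~\ref{tw:Main1}; the only points requiring care are bookkeeping. I must verify that passing from $\sum_{i=1}^n(\X_i+a_i)^2$ to its centered and rescaled form preserves the hypothesis exactly---this is immediate because a constant shift together with the bijective rescaling $a_i\mapsto 2a_i$ leaves the invariance under $\sum a_i^2$ intact---and that introducing the sum-of-squares variables $\Y,\Z$ does not spoil the independence of the two blocks, which holds because functions of independent vectors remain independent. Once these routine checks are recorded, the conclusion follows verbatim from the proposition.
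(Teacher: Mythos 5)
Your proposal is correct and follows essentially the same route as the paper: both take $\Y=\sum_{i=1}^m\X_i^2$, $\Z=\sum_{i=m+1}^n\X_i^2$ and identify $\sum_{i=1}^n c_i\X_i+\Y+\Z$ with a shifted noncentral quadratic statistic (the paper writes it as $\sum_{i=1}^n(\X_i+a_i/2)^2-\frac14\sum_{i=1}^n a_i^2$, you rescale $c_i=2a_i$, which is the same substitution read in the other direction), then invoke Theorem~\ref{tw:Main1}. Your write-up merely makes explicit the routine checks (independence of the augmented vectors, existence of moments, invariance under subtracting a constant depending on $\sum a_i^2$) that the paper leaves implicit.
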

\begin{proof}
If we put $\Y=\sum_{i=1}^m \X_i^2$ and  $\Z=\sum_{i=m+1}^n \X_i^2$  in Theorem \ref{tw:Main1} then we get  
$$\sum_{i=1}^m a_i\X_i+\Y+\sum_{i=m+1}^na_i\X_i+\Z =\sum_{i=1}^n(\X_i+a_i/2)^2-\frac{1}{4} \times\sum_{i=1}^n a_i^2.$$
This means that the distribution of $\sum_{i=1}^m a_i\X_i+\Y+\sum_{i=m+1}^na_i\X_i+\Z$ depends only on $\sum_{i=1}^n a_i^2$, which by Theorem  \ref{tw:Main1} implies the statement. 
\end{proof}

 A simple modification of the above arguments can be applied to get the following proposition. In this  proposition we assume a bit more than in Proposition \ref{prop:1} and it offers a bit
stronger conclusion. 

\begin{prop}
Let $(\X,\Y) \textrm{ and } (\Z,\T )$ be independent  and nondegenerate random vectors  with all moments and let 
$ a\X+\Y+ b\Z+\T$ and $ \X+a\Y +\Z+b\T$  
have a distribution which depends only on $a^2+b^2$, $a,b\in \mathbb{R}$. Then $\X,\Y,\Z,\T $ are independent and have normal distribution with zero means and $Var(\X)=Var(\Z)$, $Var(\Y)=Var(\T)$.  
\end{prop}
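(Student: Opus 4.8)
The plan is to apply Proposition \ref{prop:1}, together with the cumulant computation inside its proof, to each of the two hypotheses separately, and then to combine the two resulting families of vanishing joint cumulants.

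First I would feed the statistic $a\X+\Y+b\Z+\T$ into Proposition \ref{prop:1} with $\S_1=\X$ and $\S_2=\Z$; this already gives that $\X,\Z$ have the same normal law (in particular zero mean and $Var(\X)=Var(\Z)$) and that $cov(\X,\Y)=cov(\Z,\T)=0$. Moreover, repeating the expansion \eqref{eq:2} for this choice, the identity
\begin{equation*}
r_k(a\X+\Y)-r_k(\Y)=a^2\bigl(r_k(\X+\Y)-r_k(\Y)\bigr)
\end{equation*}
holds for every $k$, and matching powers of $a$ (the right-hand side being a pure multiple of $a^2$) shows that every mixed cumulant $r_k(\underbrace{\X,\dots,\X}_{i},\underbrace{\Y,\dots,\Y}_{k-i})$ with $i\neq 2$ vanishes, and likewise for $(\Z,\T)$.

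Next I would apply Proposition \ref{prop:1} to the second statistic $\X+a\Y+\Z+b\T=a\Y+\X+b\T+\Z$, now with $\S_1=\Y$ and $\S_2=\T$ (reordering the two coordinates inside each vector does not disturb the independence of the two vectors, nor their nondegeneracy). This yields that $\Y,\T$ have a common normal law with $Var(\Y)=Var(\T)$ and, by the same coefficient matching, that every $r_k(\underbrace{\X,\dots,\X}_{i},\underbrace{\Y,\dots,\Y}_{k-i})$ with $k-i\neq 2$ vanishes. Intersecting the two lists, for $(\X,\Y)$ the only joint cumulant of order $\ge 3$ not yet forced to zero is the one with $i=2=k-i$, i.e. $r_4(\X,\X,\Y,\Y)$; symmetrically, only $r_4(\Z,\Z,\T,\T)$ survives for $(\Z,\T)$.

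The hard part is to eliminate this last fourth-order term, since the polynomial matching above leaves it completely undetermined. Here I would step outside pure algebra and use analyticity: setting $W=\X+\Y$, the vanishing already proved gives $r_n(W)=0$ for all $n\ge 3$ except $r_4(W)=\binom{4}{2}\,r_4(\X,\X,\Y,\Y)$, so the cumulant generating function of $W$ equals the polynomial $-\tfrac12 r_2(W)\,s^2+\tfrac1{24}r_4(W)\,s^4$. By Marcinkiewicz's theorem a characteristic function of the form $\exp(\text{polynomial})$ can occur only when that polynomial has degree at most $2$; hence $r_4(\X,\X,\Y,\Y)=0$, and the same argument applied to $\Z+\T$ gives $r_4(\Z,\Z,\T,\T)=0$. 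At this stage every joint cumulant of $(\X,\Y)$ that involves both $\X$ and $\Y$ vanishes, which is precisely the independence of $\X$ and $\Y$; likewise $\Z$ and $\T$ are independent, and together with the assumed independence of $(\X,\Y)$ and $(\Z,\T)$ this gives joint independence of all four variables, each normal with zero mean and with $Var(\X)=Var(\Z)$, $Var(\Y)=Var(\T)$. The only genuinely new ingredient beyond Proposition \ref{prop:1} is this appeal to Marcinkiewicz's theorem to kill the degree-four cumulant that the rotation invariance alone does not control.
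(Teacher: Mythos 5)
Your proof is correct, and for most of its length it coincides with the paper's own argument: both apply Proposition \ref{prop:1} to the two statistics and match powers of $a$ in the expansion \eqref{eq:2}, concluding that $r_k(\X,\dots,\X,\Y,\dots,\Y)$ vanishes whenever the number of $\X$'s is $\neq 2$ (from $a\X+\Y+b\Z+\T$) and whenever the number of $\Y$'s is $\neq 2$ (from $\X+a\Y+\Z+b\T$). The difference is in the last step, and it is in your favor. The paper simply asserts that these two families of vanishing cumulants ``together'' give independence of $\X$ and $\Y$; but, exactly as you observe, the cumulant $r_4(\X,\X,\Y,\Y)$ --- two $\X$'s and two $\Y$'s --- lies in \emph{neither} family, so coefficient matching alone cannot finish the argument, and the printed proof is silent about this term. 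Your appeal to Marcinkiewicz's theorem applied to $W=\X+\Y$, whose cumulant sequence terminates at order $4$, is a legitimate way to kill it, so your write-up closes a genuine gap in the paper's proof rather than merely reproducing it. One technical remark to make that step airtight: Marcinkiewicz's theorem concerns characteristic functions of the form $\exp(P(t))$, so one must know that the characteristic function of $W$ really is the exponential of that degree-four polynomial, not merely that the formal cumulants of $W$ terminate. This follows from the moment-cumulant partition formula: with only $r_1(W), r_2(W), r_4(W)$ nonzero, the even moments satisfy $E(W^{2n})\le K^{2n}B_{2n}$ with $B_n$ the Bell numbers and $K$ a constant, so the moment generating function of $W$ is entire and its logarithm is the stated polynomial; Marcinkiewicz then forces $r_4(W)=0$, hence $r_4(\X,\X,\Y,\Y)=0$. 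With that one sentence added, your proof is complete --- and strictly more complete than the one in the paper.
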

\begin{proof}
From  Proposition \ref{prop:1} we get that $\X,\Y,\Z,\T $  have normal distribution with zero means and $Var(\X)=Var(\Z)$, $Var(\Y)=Var(\T)$. Now we will show that  $\X$ and $\Y $ are independent random variables.
We  proceed analogously to the proof of Proposition \ref{prop:1} with $h_k( a^2+b^2)=r_k( a\X+\Y+b\Z +\T)-r_k(\Y+\T)$, 
 which gives us equality \eqref{eq:2}, i.e. 
%$h_k(a^2+b^2)=(a^2+b^2)h_k(1)$ where $h_k(1)
%
%=r_k(\X+\Y)-r_k(\Y),$ 
%i.e.
%$$r_k(a\X+\Y)-r_k(\Y)=a^2(r_k(\X+\Y)-r_k(\Y)).$$

%$$r_k(\X^2+2a\X+\Y)-r_k(\X^2+\Y)=a^2(r_k((\X+1)^2+\Y)-r_k(\X^2+\Y))$$
\begin{align} \sum_{i=1}^ka^{i} {k \choose i} r_k(\underbrace{ \X,\dots,\X}_{i-times},\underbrace{ \Y,\dots,\Y}_{k-i-times})=a^2(r_k(\X+\Y)-r_k(\Y)).\label{twr:pom1} \end{align} 
 for  $k\geq 2$.  From this we conclude  that 
\begin{align}  r_k(\underbrace{ \X,\dots,\X}_{i-times},\underbrace{ \Y,\dots,\Y}_{l-times})=0
\label{twr:pom3} \end{align} 
 for all $i,l\in\mathbb{N}$, $i\neq 2$. By a similar argument applied to a statistic  $ \X+a\Y +\Z+b\T$   we get  $$ r_k(\underbrace{ \X,\dots,\X}_{l-times},\underbrace{ \Y,\dots,\Y}_{i-times})=0$$  
for all
$i,l\in\mathbb{N}$, $i\neq 2$   
which together with \eqref{twr:pom3} gives us independence of $\X$ and $\Y$.  Independence of $\Z$ and $\T$ follows similarly.
%\\
%\\
%\noindent 
%An analogous construction to the proof of Proposition \ref{prop:1} also gives us $2a cov( \X, \Y)+a^2 Var(\X)=a^2(2cov(\Z,\T)+Var(\Z))$ which is show $Var(\X)=Var(\Z)$ ($\X$ and $\Z$ have common variance).   
\end{proof}

\begin{center} Open Problem and Remark
\end{center}

%\begin{itemize}
\noindent \textbf{Problem 1.} In Proposition \ref{prop:1} (Theorem \ref{tw:Main1}) in this paper we assume that random
variables have all moments. I thought it would be interesting to show that we can skip this assumption.  A
version of Proposition \ref{prop:1}, with integrability replaced by the assumption that $\S_1, \S_2$ have the
same law, can be deduced from known results (note that this version  does not imply Theorem \ref{tw:Main1}). 

Here we sketch the proof
of a version of Proposition \ref{prop:1} under reduced moment assumptions but we assume additionally that random variables $\S_1, \S_2$ have the same law. Assume that $\S_1, \S_2, \Y, \Z$ have finite
moments of some positive order $p\geq 3$. Then, for every $\epsilon>0$ and any two values of $p_i > 0$, where $i\in\{1,2\}$, we see that $E(a\S_1 + \epsilon\Y + b\S_2 + \epsilon\Z)^{p_i}$  is a function of $a^2+b^2$. Passing to the limit as $\epsilon \to 0$ and using homogeneity, we deduce that
$E(a\S_1  + b\S_2 )^{p_i} = K(a^2 + b^2)^{p_i/2}$ with $K = 2^{-p_i/2}E|\S_1 + \S_2|^{p_i} = E|\S_1|^{p_i} = E|\S_2|^{p_i}$. Since this
holds for any two  odd values  $0 < p_1 < p_2 < p$, by  Theorem 2 from \cite{Braverman} we see that $\S_1$ is normal (the reason  why we assume $p\geq 3$ is that Braverman \cite{Braverman}  assumed that $p_1, p_2$ are odd). 

\noindent \textbf{Problem 2.} 
 At the end it is worthwhile to mention  the  most important characterization which is true in noncommutative
and classical probability.  In free probability   Bo\.zejko, Bryc and Ejsmont  proved that the first conditional linear  moment and 
 conditional quadratic variances  characterize free Meixner laws (Bo\.zejko and Bryc \cite{BoBr}, Ejsmont \cite{Ejs}). 
Laha-Lukacs type  characterizations  of random variables in free probability are also studied by Szpojankowski,  Weso\l owski \cite{SzWes}.
 They give a characterization of noncommutative
free-Poisson and free-Binomial variables by properties of the first two conditional moments,
which mimics Lukacs-type assumptions known from classical probability. 
The article \cite{Kemp} studies  the asymptotic behavior of the Wigner integrals. Authors prove that a normalized sequence of multiple Wigner integrals (in a fixed order of free Wigner chaos) converges in law to the
standard semicircular distribution if and only if the corresponding
sequence of fourth moments converges to 2, the fourth moment of
the semicircular law.  This finding extends the recent results by Nualart and Peccati \cite{Nualart} to free probability theory. 
%This extends to the free probabilistic, setting some recent results by Nualart and Peccati \cite{Nualart} on characterizations of
%central limit theorems in a fixed order of Gaussian Wiener chaos. 

 At this point it is worth mentioning \cite{Hiwatashi}, where the Kagan-Shalaevski characterization for free random variable was shown. 
%In \cite{Hiwatashi} authors  proved 
%the free analogue of such characterization problems of a semicircle
%law in free probability theory . 
 It would be worth asking whether the Theorem \ref{tw:Main1} is true in free probability theory.
 Unfortunately the above proof of Theorem \ref{tw:Main1} doesn't work in free probability theory, because free cumulates are noncommutative. But the Proposition \ref{prop:1} is true in free probability, with nearly the same proof (thus we can only get that $\X_i$ has  free normal distribution  under the assumption of Theorem \ref{tw:Main1}). 

%\end{itemize}
\begin{center} Acknowledgments
\end{center}
The author would like to thank  M. Bo\.zejko  for several discussions and helpful comments during 
the preparation of this paper. 
The work was partially supported by  the OPUS grant DEC-2012/05/B/ST1/00626 of National
Centre of Science and  by the Austrian Science Fund (FWF) Project No P 25510-N26.

\end{document}